\title[Jacobi Sums modulo  Prime Powers]{Evaluating Prime Power Gauss and Jacobi Sums}
\author{Misty Long}
\address{ Department of Mathematics\\
          Kansas State University\\
          Manhattan, KS 66506}
\email{milong@math.ksu.edu}
\author{Vincent Pigno}
\address{ Department of Mathematics\\
         Kansas State University\\
         Manhattan, KS 66506}
\email{pignov@math.ksu.edu}
\author{Christopher Pinner}
\address{ Department of Mathematics\\
         Kansas State University\\
         Manhattan, KS 66506}
\email{pinner@math.ksu.edu}
\keywords{Exponential sums, Gauss sums} \subjclass[2010]{Primary: 11L05; Secondary: 11L03, 11L10}
\date{\today}
\newcommand{\be}{\begin{equation}}
\newcommand{\ee}{\end{equation}}
\newcommand{\ba}{\begin{align}}
\newcommand{\ea}{\end{align}}
\begin{document}
\begin{abstract} We show that for any mod $p^m$ characters, $\chi_1, \dots, \chi_k,$  the  Jacobi  sum,
$$
\sum_{x_1=1}^{p^m}\dots \sum_{\substack{x_k=1\\x_1+\dots+x_k=B}}^{p^m}\chi_1(x_1)\dots \chi_k(x_k),
$$
has a simple evaluation when $m$ is sufficiently large (for $m\geq 2$ if $p\nmid B$). As part of the proof we give a simple evaluation of the mod $p^m$ Gauss sums when $m\geq 2$.
\end{abstract}

\maketitle
\newtheorem{theorem}{Theorem}[section]
\newtheorem{corollary}{Corollary}[section]
\newtheorem{lemma}{Lemma}[section]
\newtheorem{conjecture}{Conjecture}[section]

\section{Introduction}
For multiplicative characters $\chi_1$ and $\chi_2$ mod $q$ one defines the classical Jacobi sum by
\be \label{jacobisum}
J(\chi_1,\chi_2,q):= \sum_{x=1}^{q} \chi_1(x) \chi_2(1-x).
\ee
More generally for $k$ characters  $\chi_1, \dots, \chi_k$ mod $q$ one can define
\be\label{genjacobi1}
J(\chi_1,\dots, \chi_k,q)=\sum_{x_1=1}^{q}\dots \sum_{\substack{x_k=1\\x_1+\dots+x_k=1}}^q\chi_1(x_1)\cdots \chi_k(x_k).
\ee
 If the $\chi_i$ are mod $rs$ characters with $(r,s)=1$ then,  writing $\chi_i=\chi_i' \chi_i''$  where $\chi_i'$ and $\chi_i''$ are mod $r$ and mod $s$  characters respectively,  it is readily seen (e.g. \cite[Lemma 2]{Wenpeng2}) that  
$$ J(\chi_1,\ldots,\chi_k,rs)=J(\chi_1',\ldots ,\chi_k',r) J(\chi_1'',\ldots ,\chi_k'',s). $$
Hence, one usually only considers the case of  prime power moduli $q=p^m$.

 Zhang \&  Yao \cite{Wenpeng1} showed that the sums \eqref{jacobisum} can in fact  be evaluated explicitly  when $m$ is even  (and $\chi_1$, $\chi_2$ and $\chi_1 \chi_2$ are primitive mod $p^m$). Working with a slightly  more general binomial character  sum the authors \cite{PPP}  showed that techniques of Cochrane \& Zheng \cite{cz} can be used to obtain an evaluation of 
\eqref{jacobisum}  for any  $m>1$  ($p$ an odd prime). Zhang and Xu \cite{Wenpeng2} considered the general case, \eqref{genjacobi1},   obtaining   (assuming that   $\chi, \chi^{n_1}, \dots, \chi^{n_k},$ and $ \chi^{n_1+ \dots + n_{k}}$ are primitive characters modulo $p^m$) 
\be \label{genjacobi2}
J(\chi^{n_1},\ldots ,\chi^{n_k},p^m) =p^{\frac{1}{2}(k-1)m}\overline{\chi}(u^u)\chi(n_1^{n_1}\dots n_k^{n_k}),\;\; u:=n_1+\cdots +n_k, 
\ee
 when $m$ is even, and when the  $m, k, n_1, \dots , n_k$ are all odd
\be \label{genjacobi3}
J(\chi^{n_1},\ldots ,\chi^{n_k},p^m)  =p^{\frac{1}{2}(k-1)m} \overline{\chi}(u^u)\chi(n_1^{n_1}\dots n_{k-1}^{n_{k-1}}) \begin{cases} \varepsilon_p^{k-1} \left(\frac{un_1 \dots n_k}{p} \right), & \text{if $p\neq 2$;} \\ \left( \frac{2}{u n_1 \dots n_k}\right) & \text{if } p=2, \end{cases}
\ee
where  $\left(\frac{m}{n}\right)$ is the Jacobi symbol and  (defined more generally for later use)
 \be \label{quadep}\varepsilon_{p^m} := \begin{cases} 1, &\text{if } p^m\equiv 1\,\text{mod }4,  \\i, & \text{if }p^m\equiv 3\,\text{mod }4.
\end{cases}
\ee
 In this paper we give an evaluation for all $m>1$ (i.e. irrespective of the parity of $k$ and the $n_i$). In fact we evaluate the slightly more general sum 
$$
J_B(\chi_1,\dots, \chi_k,p^m)=\sum_{x_1=1}^{p^m}\dots \sum_{\substack{x_k=1\\x_1+\dots+x_k=B}}^{p^m}\chi_1(x_1)\cdots \chi_k(x_k).
$$
Of course when $B=p^nB'$, $p\nmid B'$  the simple change of variables $x_i \mapsto B'x_i$ gives $$J_B(\chi_1,\dots, \chi_k,p^m)=\chi_1 \cdots \chi_k(B') J_{p^n}(\chi_1,\dots, \chi_k,p^m).$$ For example $
J_B(\chi_1,\dots, \chi_k,p^m)=\chi_1 \cdots \chi_k(B) J(\chi_1,\dots, \chi_k,p^m)
$ when  $p \nmid B$. From the change of variables $x_i \mapsto -x_k x_i$, $1\leq i <k$ one also
sees that
$$ J_{p^m} (\chi_1,\ldots ,\chi_k,p^m) =\begin{cases} \phi(p^m)\chi_k(-1)  J(\chi_1,\ldots ,\chi_{k-1},p^m), & \text{ if $\chi_1\cdots \chi_k=\chi_0$, } \\
 0, & \text{  if $\chi_1\cdots \chi_k\neq \chi_0$, } \end{cases} $$
where $\chi_0$  denotes  the principal character, so  we assume that $B=p^n$ with $n<m$.

\begin{theorem}\label{maintheorem}
Let $p$ be a  prime and $m\geq n+ 2$.
Suppose that $\chi_1, \dots, \chi_k$ are $k\geq 2$ characters mod $p^m$ with at least one of them primitive. 

If the  $\chi_1,\ldots, \chi_k$  are not all  primitive mod $p^m$ or
$\chi_1\dots \chi_k$  is not induced by a primitive mod  $p^{m-n}$  character,
then $J(\chi_1,\dots, \chi_k,p^m) =0$.

 If  the $\chi_1,\ldots, \chi_k$ are primitive  mod $p^m$ and $\chi_1\cdots \chi_k$ is primitive mod $p^{m-n}$,  then
\be \label{evaluation}
J_{p^n}(\chi_1,\dots, \chi_k,p^m) =  p^{\frac{1}{2}(m(k-1)+n)}\frac{\chi_1(c_1)\cdots \chi_k(c_k)}{\chi_1\cdots \chi_k(v)}\, \delta, \ee
where  for $p$ odd
$$ \delta = \left(\frac{-2r}{p}\right)^{m(k-1)+n}\left(\frac{v}{p}\right)^{m-n} \left(\frac{c_1\cdots c_k}{p}\right)^m \varepsilon_{p^m}^k \varepsilon_{p^{m-n}}^{-1}, $$
and for $p=2$  and  $m-n\geq 5$, 
\be \label{p=2case} \delta = \left(\frac{2}{v}\right)^{m-n} \left(\frac{2}{c_1\cdots c_k}\right)^m 
\omega^{(2^n-1)v}, \ee
with  $\varepsilon_{p^m}$ as defined in \eqref{quadep}, the
$r$ and  $c_i$   as in \eqref{conditions} and \eqref{conditions2} or \eqref{conditions2'}  below, and
\be \label{defw}   v:=p^{-n}(c_1+\dots +c_k),\;\;\;  \omega := e^{\pi i/4}. \ee
 For $m\geq 5$ and $m-n=2,3$ or $4$ the formula \eqref{p=2case}  for $\delta$ 
should be multiplied by $\omega$, $\omega^{1+\chi_1\cdots \chi_k (-1)}$ or $\chi_1\cdots \chi_k (-1)\omega^{2v}$
respectively.

\end{theorem}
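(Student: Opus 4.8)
The plan is to reduce the Jacobi sum to a product of Gauss sums by detecting the constraint $x_1+\cdots+x_k=p^n$ with additive characters, and then to feed in an explicit evaluation of the prime power Gauss sums. Writing $e_q(x):=e^{2\pi i x/q}$ and $G_t(\chi):=\sum_{x=1}^{p^m}\chi(x)e_{p^m}(tx)$, orthogonality of the additive characters gives
\[
J_{p^n}(\chi_1,\dots,\chi_k,p^m)=\frac{1}{p^m}\sum_{t=1}^{p^m}e_{p^m}(-tp^n)\prod_{i=1}^k G_t(\chi_i).
\]
For $(t,p)=1$ the substitution $x\mapsto \bar t x$ gives $G_t(\chi_i)=\overline{\chi_i}(t)G(\chi_i,p^m)$, where $G(\chi,p^m):=G_1(\chi)$, whereas for $p\mid t$ the substitution $x\mapsto x(1+p^{m-1})$ gives $G_t(\chi_i)=\chi_i(1+p^{m-1})G_t(\chi_i)$, so $G_t(\chi_i)=0$ when $\chi_i$ is primitive. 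As some $\chi_i$ is assumed primitive only the terms with $(t,p)=1$ survive, and
\[
J_{p^n}(\chi_1,\dots,\chi_k,p^m)=\frac{1}{p^m}\Big(\prod_{i=1}^kG(\chi_i,p^m)\Big)\sum_{\substack{t=1\\(t,p)=1}}^{p^m}\overline{\chi_1\cdots\chi_k}(t)\,e_{p^{m-n}}(-t).
\]

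Next I would dispose of the two vanishing cases. Grouping units mod $p^m$ by their residue mod $p^{m-1}$ and summing the resulting $p$-th roots of unity shows $G(\chi,p^m)=0$ for every imprimitive $\chi$ once $m\geq2$; hence if the $\chi_i$ are not all primitive the product in the display vanishes. The remaining $t$-sum is a twisted Gauss sum, and collapsing the $p^n$ lifts of each residue mod $p^{m-n}$ identifies it with $p^n(\chi_1\cdots\chi_k)(-1)\,G(\overline{\chi_1\cdots\chi_k},p^{m-n})$; a shift $t\mapsto t(1+p^{m-n}a)$ kills it when $\chi_1\cdots\chi_k$ has conductor exceeding $p^{m-n}$, while the imprimitive vanishing (now mod $p^{m-n}$, using $m-n\geq2$) kills it when the conductor is smaller, so only the primitive mod $p^{m-n}$ case survives. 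This yields both vanishing assertions and, in the surviving case,
\[
J_{p^n}=p^{n-m}(\chi_1\cdots\chi_k)(-1)\Big(\prod_{i=1}^kG(\chi_i,p^m)\Big)G(\overline{\chi_1\cdots\chi_k},p^{m-n}),
\]
whose powers of $p$ already account for the factor $p^{\frac12(m(k-1)+n)}$ of \eqref{evaluation}.

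Everything now rests on an explicit evaluation of a primitive Gauss sum $G(\chi,p^m)$ for $m\geq2$, the lemma promised in the abstract. I would obtain it by writing each unit as $x=c(1+p^{\lceil m/2\rceil}y)$, noting that $y\mapsto\chi(1+p^{\lceil m/2\rceil}y)$ is an additive character, and completing the square: the $y$-sum forces the stationary point $c$ (the source of the parameters $c_i$ and $r$ in the conditions below and of $v=p^{-n}(c_1+\cdots+c_k)$ in \eqref{defw}), leaving $G(\chi,p^m)=p^{\lceil m/2\rceil}\chi(c)e_{p^m}(c)$ up to an explicit factor. For even $m$ the $y$-sum is a clean delta and no further factor appears, whereas for odd $m$ a residual one-dimensional sum is a classical quadratic Gauss sum contributing $\varepsilon_{p^m}(\tfrac{\cdot}{p})\sqrt p$ (with $\varepsilon$ as in \eqref{quadep}); the prime $p=2$ is handled by the analogous parametrization adapted to the non-cyclic group $(\mathbb Z/2^m)^{*}$. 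Substituting these evaluations of the $k$ sums mod $p^m$ and the one sum mod $p^{m-n}$ into the displayed product gives \eqref{evaluation}.

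The main obstacle is the final bookkeeping. Combining the $k$ Jacobi symbols and $\varepsilon_{p^m}$-factors from the mod $p^m$ Gauss sums with the single symbol and $\varepsilon_{p^{m-n}}^{-1}$ from the mod $p^{m-n}$ sum, and collecting the stray signs $(\chi_1\cdots\chi_k)(-1)$, into the clean exponents $m(k-1)+n$, $m-n$ and $m$ of $\delta$ is delicate and must respect the parities of $m$ and $m-n$. The genuinely hard case is $p=2$: since $(\mathbb Z/2^m)^{*}$ is not cyclic the Gauss sum evaluation carries eighth-root factors $\omega=e^{\pi i/4}$, and the small moduli $m-n=2,3,4$ fall outside the stable range, which is exactly where the correction factors $\omega$, $\omega^{1+\chi_1\cdots\chi_k(-1)}$ and $\chi_1\cdots\chi_k(-1)\omega^{2v}$ to \eqref{p=2case} come from.
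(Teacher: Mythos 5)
Your overall architecture is the same as the paper's: reduce $J_{p^n}$ to a ratio of Gauss sums, then feed in an explicit evaluation of primitive prime power Gauss sums. Your reduction via additive-character orthogonality is a cosmetic variant of the paper's Theorem \ref{intermsofgauss}, which instead multiplies by $G(\overline{\chi}_k,p^m)$ and uses $\sum_y\chi(y)e_{p^m}(Ay)=\overline{\chi}(A)G(\chi,p^m)$; both routes yield \eqref{symmetricform2} together with the two vanishing statements, and your treatment of the conductor trichotomy for $\chi_1\cdots\chi_k$ (conductor exceeding $p^{m-n}$ killed by the shift $t\mapsto t(1+p^{m-n}a)$, smaller conductor killed by imprimitive vanishing using $m-n\geq 2$) is correct and matches the paper's.

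The gap is in the Gauss sum evaluation, which is where all the content of $\delta$ lives. The stationary point of $G(\chi,p^m)$ is not $c$ but $\alpha=-cR_j^{-1}$, with $R_j$ the primitive-root datum of \eqref{conditions} or \eqref{conditions2'}: the paper's Theorem \ref{gausssumthm} gives $G(\chi,p^m)=p^{m/2}\chi(-cR_j^{-1})e_{p^m}(-cR_j^{-1})$ times a root of unity. The clean quotient $\chi_1(c_1)\cdots\chi_k(c_k)/\chi_1\cdots\chi_k(v)$ in \eqref{evaluation} is not mere bookkeeping; it appears because the $k$ factors $\chi_i(-R_j^{-1})$ combine to $\chi_1\cdots\chi_k(-R_j^{-1})$ and cancel against the denominator, while $\prod_{i}e_{p^m}(-c_iR_j^{-1})=e_{p^{m-n}}(-vR_j^{-1})$ cancels the denominator's phase. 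For this you must evaluate all $k+1$ Gauss sums (moduli $p^m$ and $p^{m-n}$) with one and the same $R_j$, which is exactly what the congruences \eqref{poddcong} (odd $p$) and \eqref{p2cong'}, \eqref{cong3} ($p=2$) are for; the paper takes $j=\lceil m/2\rceil+2$ throughout. Your stated form $p^{\lceil m/2\rceil}\chi(c)e_{p^m}(c)$ ``up to an explicit factor'' hides an $R_j$-dependent root of unity that would not cancel in the ratio, and also mis-sizes the odd-$m$ case (the prefactor there is $p^{\lfloor m/2\rfloor}$ before the residual quadratic Gauss sum restores $p^{m/2}$). You are right that the $m-n=2,3,4$ corrections for $p=2$ come from the Gauss sum anomalies at moduli $2^2,2^3,2^4$ applied to the denominator sum, but until the Gauss sum lemma is actually proved --- in particular the $\left(\frac{2}{c}\right)^m\omega^{c}$ factor for $p=2$, $m\geq 5$, which requires the non-cyclic structure of $(\mathbb Z/2^m)^{*}$ and the congruence \eqref{p2cong'} --- the argument is a plan rather than a proof.
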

Of course it is  natural  to assume that at least one of the $\chi_1,\ldots ,\chi_k$ is primitive, 
otherwise we can reduce the sum to a mod $p^{m-1}$ sum.
For $n=0$  and $\chi_1,\ldots ,\chi_k$ and $\chi_1 \cdots \chi_k$ all primitive mod $p^m$ our result simplifies to  
$$ J(\chi_1,\dots, \chi_k,p^m) =  p^{\frac{m(k-1)}{2}}\frac{\chi_1(c_1)\cdots \chi_k(c_k)}{\chi_1\cdots \chi_k(v)}\; \delta,\;\;\; v=c_1+\cdots  +c_k, $$
with
$$ \delta = \begin{cases}  1, &  \text{if $m$ is even,} \\
\left(\frac{vc _1\cdots c_k}{p}\right) \left(\frac{-2rc}{p}\right)^{k-1}\varepsilon_p^{k-1}, & \text{if $m$ is odd and $p\neq 2$, } \\
\left(\frac{2}{vc_1\cdots c_k}\right), & \text{if $m\geq 5$ is odd and $p=2$.} \end{cases}$$
In the remaining $n=0$  case,  $p=2$, $m=3$ we 
have $J(\chi_1,\ldots ,\chi_k,2^3)=2^{\frac{3}{2}(k-1)}(-1)^{\lfloor \frac{\ell}{2}\rfloor}$ where $\ell$ denotes  the number of characters $1\leq i\leq k$  with $\chi_i(-1)=-1$.

 When the $\chi_i=\chi^{n_i}$ for
some primitive mod $p^m$ character $\chi$ we can write $c_i=n_ic$  (where $c$ is determined by 
 $\chi(a)$ as in \eqref{conditions2} or \eqref{conditions2'}) and we recover the form
\eqref{genjacobi2} and \eqref{genjacobi3} with the addition of a factor $\left(\frac{-2rc}{p}\right)^{k-1}$ for $p\neq 2$, $m$ odd, which of course can be ignored when $k$ is odd as assumed in \cite{Wenpeng2}.

For completeness  we observe that in the few remaining $m\geq n+2$  cases \eqref{evaluation} becomes
$$J_{p^n}(\chi_1,\ldots,\chi_k,p^m)=2^{\frac{1}{2}(m(k-1)+n)}\begin{cases}
-i\omega^{k-\sum_{i=1}^k \chi_i(-1)}, & \text{ if $m=3$, $n=1$,} \\ 
\omega^{\chi_1\cdots \chi_k(-1)-1-v} \prod_{i=1}^k\chi_i(-c_i), & \text{ if $m=4$, $n=1$,} \\ 
i^{1-v} \prod_{i=1}^k\chi_i(c_i), & \text{ if $m=4$, $n=2$.}\end{cases} $$

Our  proof  of Theorem \ref{maintheorem}  involves expressing the Jacobi sum \eqref{genjacobi1} in terms of  classical Gauss sums
 \be \label{defGauss} G(\chi,p^m):=\sum_{x=1}^{p^m} \chi(x)e_{p^m}(x), \ee
where $\chi$ is a mod $p^m$ character and $e_{y}(x):=e^{2 \pi i x / y}$. 
Writing \eqref{jacobisum} in terms of Gauss sums is well known for the mod $p$ sums
and the corresponding result for  \eqref{genjacobi1}  can be found, along with
many other  properties of Jacobi sums, in Berndt, R. J. Evans and K. S. Williams  \cite[Theorem 2.1.3 \& Theorem 10.3.1 ] {BerndtBk}  or Lidl-Niederreiter \cite[Theorem 5.21]{LidlNiedBk}. There the results are
stated for sums over finite fields, $\mathbb F_{p^m}$,  so it is not surprising that such
expressions exist in the less studied mod $p^m$ case.  When $\chi_1,\ldots ,\chi_k$ and $\chi_1\cdots \chi_k$ are primitive,  Zhang \& Yao  \cite[Lemma 3]{Wenpeng1} for $k=2$, and  Zhang and Xu \cite[Lemma 1]{Wenpeng2}  for
general $k$,  showed that
\be \label{symmetricform}
J(\chi_1,\dots, \chi_k,p^m)= \frac{ \prod_{i=1}^{k} G(\chi_i , p^m)}{G(\chi_1 \dots \chi_k,p^{m})}.
\ee 
In Theorem \ref{intermsofgauss}    we obtain a similar expansion for  $J_{p^n}(\chi_1,\ldots ,\chi_k,p^m)$.
As we show in Theorem \ref{gausssumthm}  the mod $p^m$ 
Gauss sums can be evaluated explicitly using the method of Cochrane and Zheng \cite{cz} when $m\geq 2$.

For $m=n+1$  (with at  least one $\chi_i$ primitive) the Jacobi sum is  still zero   unless all the $\chi_i$ are primitive mod $p^m$ and $\chi_1\cdots \chi_k$ is a mod $p$
character. Then  we can say that $|J_{p^n}(\chi_1,\ldots ,\chi_k,p^m)|=p^{\frac{1}{2}mk-1}$ if $\chi_1\cdots \chi_k=\chi_0$ and $p^{\frac{1}{2}(mk-1)}$ otherwise, but an explicit
evaluation in the latter case  is equivalent to  an explicit evaluation of the mod $p$ Gauss sum $G(\chi_1\cdots \chi_k,p)$ when $m\geq 2$.

\section{Gauss Sums}\label{GaussSums}

In order to use the result from \cite{cz2} we must first define some terms. For $p$ odd let $a$ be a primitive root mod $p^m$.  We define the integers  $r$, and $R_j$ by 
\be \label{conditions}
a^{\phi(p)}=1+rp, \hskip 3ex a^{\phi \left(p^{j}\right)}=1+R_j p^{j}. 
\ee
Note, $p\nmid r$ and for $j\geq i$, \be \label{poddcong} R_j\equiv R_i\text{  mod }p^i. \ee For a  character $\chi_i$ mod $p^m$ we define $c_i$ by 
\be\label{conditions2} \chi_i(a)=e_{\phi(p^m)}(c_i), \ee with $1\leq c_i \leq \phi(p^m)$.
Note,  $p\nmid c_i$ exactly when $\chi_i$ is primitive. For $p=2$  and $m\geq 3$ we need two
generators $-1$ and $a=5$ for $\mathbb Z_{2^m}^*$ and define $R_j$, $j\geq 2$,  and $c_i$ by
\be \label{conditions2'}  a^{2^{j-2}}=1+R_j2^j,   \hspace{3ex}  \chi_i(a) = e_{2^{m-2}}(c_i), \ee
with $\chi_i$ primitive exactly  when $2\nmid c_i$. Noting that $R_i^2 \equiv 1$ mod $8$, we 
get 
\be \label{p2cong}
R_{i+1} =R_{i}+ 2^{i-1}R_{i}^2  \equiv R_{i} + 2^{i-1}\text{ mod }2^{i+2}.
\ee
For $ j\geq i+2 $  this  gives the relationships,
\be \label{p2cong'}
 R_j  \equiv R_{i+2} \equiv R_{i+1}+2^i \equiv (R_i+2^{i-1})+2^{i} \equiv R_i-2^{i-1} \text{ mod } 2^{i+1} 
\ee
and
\be \label{cong3}
 R_j  \equiv (R_{i-1} +2^{i-2}) -2^{i-1} \equiv R_{i-1}-2^{i-2} \text{ mod } 2^{i+1}.
\ee
We shall  need an explicit evaluation of the mod $p^m$, $m\geq 2$,   Gauss sums.  The form we use
comes from applying the technique of Cochrane \& Zheng \cite{cz} as formulated in \cite{PP}.
 For odd $p$  this is  essentially the same  as  \cite[\S 9]{cz2} but for $p=2$ seems new. Variations can be found in Odoni \cite{Odoni} and Mauclaire \cite{Mau1}  (see also \cite[Chapter 1]{BerndtBk}).

\begin{theorem} \label{gausssumthm} Suppose that  $\chi$ is a  mod $p^m$ character with  $m\geq 2$. If $\chi$ is imprimitive, then $G(\chi,p^m)=0$.
If $\chi $ is primitive,  then 
\be \label{GaussSumEval}
G(\chi,p^m) =p^{\frac{m}{2}}\chi\left(-cR_{j}^{-1}\right)e_{p^m}\left( -cR_{j}^{-1} \right)\begin{cases} \left(\frac{-2rc}{p}\right)^m \varepsilon_{p^m}, & \text{if $p\neq 2$,}\\  
\left( \frac{2}{c} \right)^m \omega^c, & \text{if $p= 2$ and $m\geq 5$,}
  \end{cases}
\ee
for any $j\geq\lceil \frac{m}{2}\rceil$ when $p$ is odd and any $j\geq\lceil \frac{m}{2}\rceil+2$
 when  $p=2$.

For the remaining cases 
\be \label{special} G(\chi,2^m)=2^{\frac{m}{2}}\begin{cases}  i, & \text{ if $m=2$,} \\
\omega^{1-\chi(-1)}, & \text{ if $m=3$,} \\
\chi(-c)e_{16}(-c), & \text{ if $m=4$.} \end{cases} 
\ee
Here $x^{-1}$ denotes the inverse of $x$ mod $p^m$,  and $r$, $R_j$ and $c$ are as in \eqref{conditions} and \eqref{conditions2} or \eqref{conditions2'} and $\omega$ as in \eqref{defw}.

\end{theorem}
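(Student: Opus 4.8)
My plan is to evaluate $G(\chi,p^m)$ by the Cochrane--Zheng stationary-phase method: parametrize the units, linearize $\chi$ on the principal units, sum over the ``tail'' variable to localize the sum at a critical point, and reduce whatever residual sum remains to a classical quadratic Gauss sum. Throughout I use that $\chi(x)=0$ for $p\mid x$, so $G(\chi,p^m)=\sum_{p\nmid x}\chi(x)e_{p^m}(x)$. For the imprimitive case I would argue directly: if $\chi$ is induced by a character mod $p^{m-1}$ then $\chi(1+p^{m-1}t)=1$ for all $t$, so the unit substitution $x\mapsto x(1+p^{m-1}t)$ shows $\sum_{p\nmid x}\chi(x)e_{p^m}(x)e_p(tx)=G(\chi,p^m)$ for every $t$; summing over $t$ mod $p$ and using $\sum_{t=0}^{p-1}e_p(tx)=p$ when $p\mid x$ and $0$ otherwise gives $pG(\chi,p^m)=0$, hence $G(\chi,p^m)=0$.

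For the primitive case and $p$ odd, set $\ell=\lceil m/2\rceil$ and write each unit as $x=u+p^\ell v$ with $u$ over unit representatives mod $p^\ell$ and $v$ mod $p^{m-\ell}$. Since $2\ell\geq m$, the map $w\mapsto 1+p^\ell w$ is a group isomorphism onto the principal units mod $p^m$, and evaluating $\chi$ on the generator $1+R_\ell p^\ell=a^{\phi(p^\ell)}$ via \eqref{conditions2} gives the \emph{exact} linearization $\chi(1+p^\ell w)=e_{p^{m-\ell}}(cR_\ell^{-1}w)$. Writing $\chi(u+p^\ell v)=\chi(u)\chi(1+p^\ell vu^{-1})$ and $e_{p^m}(u+p^\ell v)=e_{p^m}(u)e_{p^{m-\ell}}(v)$, the sum over $v$ becomes a complete additive character sum that vanishes unless $u\equiv -cR_\ell^{-1}\pmod{p^{m-\ell}}$, localizing everything at the critical point.

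If $m$ is even, this congruence pins down $u$ modulo $p^\ell=p^{m-\ell}$, leaving one surviving term and the clean value $p^{m/2}\chi(-cR_\ell^{-1})e_{p^m}(-cR_\ell^{-1})$, with no quadratic correction, consistent with $\varepsilon_{p^m}=1$ and $\left(\frac{-2rc}{p}\right)^{m}=1$. If $m$ is odd, the congruence only fixes $u$ modulo $p^{\ell-1}$, so writing $u=u_0+p^{\ell-1}s$ leaves a sum over the last digit $s$ mod $p$; here $2(\ell-1)<m$, so expanding $\chi(1+p^{\ell-1}su_0^{-1})$ to second order retains a genuine quadratic term controlled by $R_{\ell-1}$, turning the $s$-sum into a quadratic Gauss sum $\sum_{s\bmod p}e_p(\alpha s^2+\beta s)$. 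The classical evaluation of this sum supplies the factor $\left(\frac{-2rc}{p}\right)\varepsilon_p=\left(\frac{-2rc}{p}\right)^m\varepsilon_{p^m}$ and, after collecting phases, yields \eqref{GaussSumEval}. That the stated value is the same for every $j\geq\lceil m/2\rceil$ follows from \eqref{poddcong} together with the local constancy of $\chi(w)e_{p^m}(w)$ at the critical point.

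For $p=2$ I would run the identical scheme, but now $(\mathbb Z/2^m)^*$ needs the two generators $-1$ and $a=5$, so both the linearization and the second-order expansion must be carried through the congruences \eqref{p2cong}, \eqref{p2cong'} and \eqref{cong3}; the residual one-digit sum becomes a quadratic sum to a power of $2$, producing the eighth-root factor $\omega^{c}$ and the symbol $\left(\frac{2}{c}\right)^m$ in place of $\varepsilon_p$. The main obstacle is exactly this $2$-adic bookkeeping: pinning down the $\omega=e^{\pi i/4}$ phase precisely, and handling the small moduli $m=2,3,4$ separately (where $\ell$ is too small for the second-order expansion or the $2$-group structure degenerates) to obtain \eqref{special}.
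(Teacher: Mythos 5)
Your method is the right one and is in fact the same one underlying the paper's proof: the paper does not redo the stationary-phase computation but quotes it from \cite{PP} (Theorems 2.1 and 5.1 there, themselves obtained by the Cochrane--Zheng reduction), and then spends its effort converting that evaluation into the form \eqref{GaussSumEval}. Your treatment of the imprimitive case and of odd $p$ is correct and essentially complete: the localization at $u\equiv -cR_{\lceil m/2\rceil}^{-1}$, the exactness of the localization for even $m$, the residual one-digit quadratic Gauss sum mod $p$ for odd $m$, and the passage from $R_{\lceil m/2\rceil}$ to any $R_j$ via \eqref{poddcong} all go through as you describe.

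The genuine gap is in the $p=2$ case, which is the part the paper itself regards as new. Two concrete points. First, your claim that the factor $\omega^{c}$ is ``produced by the residual one-digit quadratic sum'' is wrong for even $m$: there the localization is exact and there is no residual sum at all; $\omega^{c}$ arises solely because the theorem writes the critical point as $-cR_j^{-1}$ with $j\geq\lceil m/2\rceil+2$ rather than $j=\lceil m/2\rceil$. Unlike the odd-$p$ case, where $R_j\equiv R_i \bmod p^i$ makes the choice of index harmless, for $p=2$ one has only $R_j\equiv R_i-2^{i-1}\bmod 2^{i+1}$ by \eqref{p2cong'}, so $-cR_j^{-1}$ and $-cR_{\lceil m/2\rceil}^{-1}$ lie in different residue classes modulo $2^{\lceil m/2\rceil}$, and $\chi(x)e_{2^m}(x)$ takes values at the two points differing by a nontrivial eighth root of unity. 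Computing that discrepancy --- the identity $\chi\left(1-R_j2^{J-1}\right)e_{2^m}\left(c2^{J-1}\right)=\omega^{c}$ for $m$ even and $\omega^{2c}$ for $m$ odd, via the congruences \eqref{p2cong'} and \eqref{cong3} --- is the actual content of the paper's proof and is absent from your plan. Second, the clean argument requires $m\geq 6$; the case $m=5$ has to be verified separately (the paper does this numerically over the $2^{m-2}$ primitive characters), and $m=2,3,4$ must be computed directly to obtain \eqref{special} and the correction factors $\omega^{-1}$, $\omega^{-1-\chi(-1)}$, $\chi(-1)\omega^{-2c}$ recorded at the end of the paper's proof. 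You flag all of this as ``$2$-adic bookkeeping,'' but it is precisely where the work in this theorem lives, so the proposal as it stands does not yet establish the $p=2$ half of the statement.
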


\begin{proof}   When $p$ is odd  \cite[Theorem 2.1]{PP} gives
 $$ G(\chi,p^m) = p^{m/2} \chi(\alpha) e_{p^m}(\alpha) \left(\frac{-2rc}{p^m}\right) \varepsilon_{p^m}$$
where $\alpha$ is a solution of
\be \label{gausschar} c+R_{J}x \equiv 0 \text{ mod } p^{J},\;\;\; J:=\left\lceil \frac{m}{2}\right\rceil, \ee
(and zero if no solution exists). If $p\mid c$ there is no solution and $G(\chi,p^m)=0$. If $p\nmid c$ 
by \eqref{poddcong} we may take $\alpha=-cR_J^{-1} \equiv -cR_j^{-1}$ mod $p^J$ for any $j\geq J$.
If  $p=2$, $m\geq 6$, and $\chi$ is primitive,  then \cite[Theorem 5.1]{PP}  gives
$$ G(\chi,p^m) = 2^{m/2} \chi (\alpha) e_{2^m}(\alpha) \begin{cases} 1, & \text{ if $m$ is even,} \\ \left(\frac{1+ (-1)^{\lambda} i^{R_Jc}}{\sqrt{2}}\right), & \text{ if $m$ is odd, }\end{cases} $$
where $\alpha$ is a solution to 
\be \label{gausschar2} c+R_{J}x \equiv 0 \text{ mod } 2^{\lfloor \frac{m}{2}\rfloor}, \ee
and  $ c+R_{J}\alpha  = 2^{\lfloor \frac{m}{2}\rfloor}\lambda $
(and zero if there is  no solution or $\chi$ is  imprimitive). If $2\nmid c$  and $j\geq J+2$ 
then (using \eqref{p2cong'}  and $R_j\equiv -1$ mod 4)  we can take
$$\alpha \equiv   -cR_J^{-1}\equiv -c(R_j+2^{J-1})^{-1} \equiv -c(R_j^{-1}-2^{J-1}) \text{ mod } 2^{J+1},  $$ 
and
$$ \chi(\alpha)e_{2^m}(\alpha) = \chi(-cR_j^{-1})e_{2^m}(-cR_j^{-1})  \chi(1-R_j2^{J-1})e_{2^m} (c2^{J-1}), $$ 
where, checking the four possible $c$ mod 8,
$$ \left(\frac{1+ (-1)^{\lambda} i^{R_Jc}}{\sqrt{2}}\right)=\left(\frac{1- i^{c}}{\sqrt{2}}\right)= \omega^{-c}  \left(\frac{2}{c}\right). $$
 Now
$$ e_{2^m} (c2^{J-1})= e_{2^{m-2}} (c2^{J-3})=\chi\left(5^{2^{J-3}}\right)=\chi\left(1+R_{J-1}2^{J-1}\right), $$
where, since $R_j\equiv R_{J-1}-2^{J-2}$ mod $2^{J+1}$,
\begin{align*}  \left(1-R_j2^{J-1}\right)\left(1+R_{J-1}2^{J-1}\right) & = 1+(R_{J-1}-R_j)2^{J-1} -R_jR_{J-1} 2^{2J-2} \\  & \equiv 1+2^{2J-3} +R_{J-1}2^{2J-2} \equiv 1+R_{2J-3}2^{2J-3} \text{ mod } 2^m. \end{align*}
Hence 
$$  \chi(1-R_j2^{J-1})e_{2^m} (c2^{J-1})= \chi\left(5^{2^{2J-5}}\right)=e_{2^{m-2}}(c2^{2J-5}) =\begin{cases} \omega^c, & \text{ if $m$ is even,} \\
 \omega^{2c}, & \text{ if $m$ is odd.} \end{cases} $$
One can check numerically that the formula still holds for the $2^{m-2}$ primitive mod $2^m$  characters when $m=5$. For $m=2,3,4$  one has \eqref{special}   instead of $2i\omega$,
$2^{\frac{3}{2}}\omega^2$, $2^2\chi (c) e_{2^4}(c)\omega^c$ (so our formula \eqref{GaussSumEval}
requires an extra factor $\omega^{-1}$, $\omega^{-1-\chi(-1)}$ or $\chi(-1)\omega^{-2c}$
respectively).

\end{proof}

We shall need the counterpart of \eqref{symmetricform} for the $J_{p^n}(\chi_1,\ldots ,\chi_k)$. We state  a less symmetrical version  to allow weaker  assumptions
on the $\chi_i$:
\begin{theorem} \label{intermsofgauss} Suppose that $\chi_1, \dots, \chi_k$ are characters mod $p^m$ with $m>n$ and  $\chi_k$ primitive mod $p^m$.
If $\chi_1 \cdots \chi_k$ is a  mod $p^{m-n}$ character,  then
\be
J_{p^n}(\chi_1,\ldots,\chi_k,p^m) =p^n \, \frac{\overline{G(\chi_1 \cdots \chi_k, p^{m-n})}}{\overline{G(\chi_k, p^m)}}   \prod_{i=1}^{k-1} G(\chi_i, p^m).
\ee
If   $\chi_1 \cdots \chi_k$ is not a  mod $p^{m-n}$ character,  then $J_{p^n}(\chi_1,\ldots,\chi_k,p^m)=0$.
\end{theorem}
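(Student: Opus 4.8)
The plan is to detect the linear condition $x_1+\dots+x_k\equiv p^n$ by additive characters and then collapse the resulting twisted sums back into Gauss sums. Using the orthogonality relation
\[
\frac{1}{p^m}\sum_{t=1}^{p^m} e_{p^m}\bigl(t(x_1+\dots+x_k-p^n)\bigr)=\begin{cases}1,& x_1+\dots+x_k\equiv p^n \bmod p^m,\\ 0,&\text{otherwise,}\end{cases}
\]
and interchanging the order of summation, one obtains
\[
J_{p^n}(\chi_1,\dots,\chi_k,p^m)=\frac{1}{p^m}\sum_{t=1}^{p^m} e_{p^m}(-p^n t)\prod_{i=1}^{k} g_i(t),\qquad g_i(t):=\sum_{x=1}^{p^m}\chi_i(x)e_{p^m}(tx).
\]

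The first key step is the separation identity $g_i(t)=\overline{\chi_i}(t)\,G(\chi_i,p^m)$, valid for every $t$ with $p\nmid t$ (substitute $x\mapsto t^{-1}x$), together with the vanishing $g_k(t)=0$ whenever $p\mid t$. The latter is exactly where the primitivity of $\chi_k$ enters: writing $t=p^a t'$ with $p\nmid t'$ and partitioning the units $x$ into cosets of $1+p^{m-a}\mathbb{Z}/p^m\mathbb{Z}$, the inner sum of $\chi_k$ over each coset vanishes because a primitive character is nontrivial on this subgroup. Hence only the terms with $p\nmid t$ survive, and factoring out the constant $\prod_{i=1}^{k} G(\chi_i,p^m)$ leaves
\[
J_{p^n}(\chi_1,\dots,\chi_k,p^m)=\frac{1}{p^m}\Bigl(\prod_{i=1}^{k} G(\chi_i,p^m)\Bigr)\sum_{t=1}^{p^m}\overline{\chi_1\cdots\chi_k}(t)\,e_{p^{m-n}}(-t),
\]
where the sum runs over all $t$ since $\chi_1\cdots\chi_k$ vanishes off the units and $e_{p^m}(-p^n t)=e_{p^{m-n}}(-t)$.

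The second key step evaluates this character sum by partitioning $t$ according to its residue modulo $p^{m-n}$: the additive factor depends only on that residue, so summing the multiplicative factor over each fiber reduces to summing $\overline{\chi_1\cdots\chi_k}$ over the subgroup $1+p^{m-n}\mathbb{Z}/p^m\mathbb{Z}$. This yields the clean dichotomy. If $\chi_1\cdots\chi_k$ is not induced by a mod $p^{m-n}$ character it is nontrivial on that subgroup, the fiber sums vanish, and $J_{p^n}=0$, settling the second assertion. If it is so induced, each of the $p^{m-n}$ residues is hit $p^n$ times, and after the substitution $t\mapsto -t$ and the relation $G(\overline{\psi})=\psi(-1)\overline{G(\psi)}$ the sum becomes $p^n\,\overline{G(\chi_1\cdots\chi_k,p^{m-n})}$. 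Finally, since $\chi_k$ is primitive one has $G(\chi_k,p^m)\overline{G(\chi_k,p^m)}=|G(\chi_k,p^m)|^2=p^m$, so the factor $G(\chi_k,p^m)/p^m$ may be replaced by $1/\overline{G(\chi_k,p^m)}$; this turns $\tfrac{1}{p^m}\prod_{i=1}^{k}G(\chi_i,p^m)$ into $\overline{G(\chi_k,p^m)}^{-1}\prod_{i=1}^{k-1}G(\chi_i,p^m)$ and gives the stated formula. I expect the only genuine obstacle to be the bookkeeping in the two coset-sum vanishing statements—precisely the points where the hypotheses that $\chi_k$ is primitive and that $\chi_1\cdots\chi_k$ has conductor dividing $p^{m-n}$ are consumed—and keeping track of the conjugation and sign so that $\overline{G(\chi_1\cdots\chi_k,p^{m-n})}$, and not $G(\chi_1\cdots\chi_k,p^{m-n})$, emerges.
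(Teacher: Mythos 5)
Your argument is correct. It reaches the same two key objects as the paper --- the product $\prod_i G(\chi_i,p^m)$ and the twisted sum $\sum_{p\nmid t}\overline{\chi_1\cdots\chi_k}(t)e_{p^m}(\pm p^n t)$, evaluated by the same conductor dichotomy (trivial or nontrivial on the subgroup of units $\equiv 1 \bmod p^{m-n}$) --- but it gets there by a slightly different decomposition. You detect the constraint $x_1+\cdots+x_k\equiv p^n$ by additive orthogonality over $t \bmod p^m$, which introduces a factor $1/p^m$ that you must later cancel using $G(\chi_k,p^m)\overline{G(\chi_k,p^m)}=p^m$; the paper instead expands only the single factor $\chi_k(p^n-x_1-\cdots-x_{k-1})$ against $G(\overline{\chi}_k,p^m)$ via the identity $\sum_y\chi(y)e_{p^j}(Ay)=\overline{\chi}(A)G(\chi,p^j)$, which builds the denominator $\overline{G(\chi_k,p^m)}$ into the formula from the start and avoids the normalization step. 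The primitivity of $\chi_k$ is consumed at the analogous point in each version: you use it to show $g_k(t)=0$ for $p\mid t$ (so only unit $t$ survive), while the paper uses it to make the expansion of $\chi_k(A)$ valid even when $p\mid A$ --- the same fact in dual form. Your coset argument for the vanishing of $g_k(t)$ and for the fiber sums over $1+p^{m-n}\mathbb{Z}/p^m\mathbb{Z}$ is exactly the paper's ``choose $z\equiv 1 \bmod p^{j-1}$ with $\chi(z)\neq 1$'' device, so nothing essential is missing; the only practical difference is that your route is the standard finite-field-style proof adapted to $\mathbb{Z}/p^m\mathbb{Z}$, whereas the paper's is marginally shorter because the asymmetric form of the answer is produced directly.
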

From well known properties of Gauss sums  (see for example Section $1.6$ of \cite{BerndtBk}),
\be \label{gaussmod}
\mid G(\chi,p^j) \mid=\begin{cases} p^{j/2}, & \text{if $\chi$ is primitive mod $p^j$,}\\
1, & \text{if $\chi=\chi_0$ and $j=1$,}\\
0, & \text{otherwise,}
\end{cases}
\ee
when $\chi_1 \cdots \chi_k$ is a  primitive mod $p^{m-n}$ character and at least one of the $\chi_i$ is a primitive mod $p^m$ character we immediately  obtain the symmetric form
\be \label{symmetricform2}
J_{p^n}(\chi_1,\dots, \chi_k,p^m)=\frac{ \prod_{i=1}^{k} G(\chi_i , p^m)}{G(\chi_1 \dots \chi_k,p^{m-n})}.
\ee
In particular we recover \eqref{symmetricform} under the sole assumption that $\chi_1 \cdots \chi_k$ is a primitive mod $p^m$ character.

\begin{proof}
We first note that if $\chi$ is a primitive character mod $p^j$, $j \geq 1$, then
$$
\sum_{y=1}^{p^j} \chi(y) e_{p^j}(Ay)=\overline{\chi}(A) G(\chi, p^j).
$$
 Indeed, for $p\nmid A$ this is plain
from $y\mapsto A^{-1}y$. If $p\mid A$ and $j=1$ the sum equals $\sum_{y=1}^{p}\chi(y)=0$. For $j\geq 2$ as $\chi$ is primitive there exists a $ z \equiv 1$ mod $p^{j-1}$ with  $\chi(z)\neq 1$, (there must be some $a\equiv b$ mod $p^{j-1}$ with  $\chi(a)\neq\chi(b)$, and we can take  $z=ab^{-1}$) so
\be \label{sumiszero}
 \sum_{y=1}^{p^j} \chi(y)e_{p^j}(Ay)=\sum_{y=1}^{p^j} \chi(zy)e_{p^j}(Azy)=\chi(z)\sum_{y=1}^{p^j} \chi(y)e_{p^j}(Ay)
\ee
and $\sum_{y=1}^{p^j} \chi(y)e_{p^j}(Ay)=0$.

Hence if $\chi_k$ is a primitive character mod $p^m$ we have
\begin{align*}
& \overline{\chi}_k(-1) G(\overline{\chi}_k, p^m)  \sum_{x_1=1}^{p^m} \dots \sum_{x_{k-1}=1}^{p^m} \chi_1(x_1) \dots \chi_{k-1}(x_{k-1}) \chi_k(p^n- x_1- \dots - x_{k-1})\\
& = \overline{\chi}_k(-1) \sum_{x_1=1}^{p^m} \dots \sum_{x_{k-1}=1}^{p^m} \chi_1(x_1) \dots \chi_{k-1}(x_{k-1}) \sum_{y=1}^{p^m} \overline{\chi}_k(y) e_{p^m}((p^n- x_1- \dots - x_{k-1})y)\\
& = \sum_{\substack{y=1\\  p \nmid y}}^{p^m} \overline{\chi}_k(-y) e_{p^m}(p^ny) \left( \sum_{x_1=1}^{p^m} \chi_1(x_1) e_{p^m}(-x_1 y) \dots   \sum_{x_{k-1}=1}^{p^m} \chi_{k-1}(x_{k-1}) e_{p^m}(-x_{k-1} y) \right) \\
& = \sum_{\substack{y=1 \\ p\nmid y}}^{p^m} \overline{\chi_1 \dots \chi}_k (-y) e_{p^m}(p^ny)
\left( \sum_{x_1=1}^{p^m} \chi_1(x_1) e_{p^m}(x_1) \dots   \sum_{x_{k-1}=1}^{p^m} \chi_{k-1}(x_{k-1}) e_{p^m}(x_{k-1}) \right)\\
& = \overline{\chi_1 \dots \chi}_{k} (-1)\sum_{\substack{y=1 \\ p\nmid y}}^{p^m} \overline{\chi_1 \dots \chi}_k (y) e_{p^m}(p^ny) \prod_{i=1}^{k-1} G(\chi_i, p^m).
\end{align*}
If $m>n$ and $\overline{\chi_1 \dots \chi}_k$ is a mod $p^{m-n}$ character,  then
$$
\sum_{\substack{y=1 \\ p\nmid y}}^{p^m} \overline{\chi_1 \dots \chi}_k (y) e_{p^m}(p^ny)=p^n\sum_{\substack{y=1 \\ p\nmid y}}^{p^{m-n}} \overline{\chi_1 \dots \chi}_k (y) e_{p^{m-n}}(y) =p^n G(\overline{\chi_1 \dots \chi}_k,p^{m-n}).
$$
If $\overline{\chi_1 \dots \chi}_k$ is a primitive character mod $p^j$ with $m-n < j \leq m$, then by the same reasoning as in \eqref{sumiszero}
$$
\sum_{\substack{y=1 \\ p\nmid y}}^{p^m} \overline{\chi_1 \dots \chi}_k (y) e_{p^m}(p^ny)=p^{m-j}\sum_{y=1 }^{p^{j}} \overline{\chi_1 \dots \chi}_k (y) e_{p^{j}}(p^{j-(m-n)}y) )=0
$$
and the result follows on observing that
$$ \overline{G(\chi,p^m)}= \overline{\chi}(-1) G(\overline{\chi},p^m). $$
\end{proof}

\section{Proof of Theorem \ref{maintheorem}}
We assume that $\chi_1,\ldots ,\chi_k$ are all primitive mod $p^m$ characters and $\chi_1 \cdots \chi_k$ is  a primitive mod $p^{m-n}$ character,  since otherwise from Theorem  \ref{intermsofgauss} and \eqref{gaussmod}, $J_{p^n}(\chi_1,\ldots ,\chi_k,p^m)=0$. In particular we have 
  \eqref{symmetricform2}.

Writing $R=R_{\lceil \frac{m}{2}\rceil +2}$ then  by \eqref{symmetricform2} and the evaluation of Gauss sums in Theorem \ref{gausssumthm} we have
\begin{align}
J_{p^n}(\chi_1,\ldots ,\chi_k,p^m) &  = 
\frac{ \prod_{i=1}^{k} G(\chi_i , p^m)}{G(\chi_1 \dots \chi_k,p^{m-n})}\notag \\
&= \frac{ \prod_{i=1}^{k} p^{m/2}\chi_i(-c_iR^{-1}) e_{p^m}(-c_iR^{-1})\delta_i}{p^{(m-n)/2}\chi_1 \dots \chi_k(-vR^{-1})e_{p^{m-n}}(-vR^{-1})\delta_s}\notag \\
& = p^{\frac{1}{2}(m(k-1)+n)} \frac{ \prod_{i=1}^{k} \chi_i(c_i)}{\chi_1 \dots \chi_k(v)}\delta_s^{-1} \prod_{i=1}^k \delta_i \label{deltaprod},
\end{align}
where
$$
\delta_i = \begin{cases}  \left( \frac{-2rc_i}{p} \right)^m\varepsilon_{p^m}, & \text{if  $p$ is odd, $p\neq 2$,}\\
\left(\frac{2}{c_i}\right)^m \omega^{c_i}, & \text{if $p=2$ and  $m\geq 5$,} \end{cases}
$$
and
$$
 \delta_s= \begin{cases} 
 \left( \frac{-2rv}{p} \right)^{m-n}\varepsilon_{p^{m-n}}, & \text{if $p$  is odd,}\\
\left(\frac{2}{v}\right)^{m-n} \omega^{v}, & \text{if $p=2$ and $m-n\geq 5$,} \end{cases}
$$
and the result is plain when $p$ is odd or $p=2$, $m-n\geq 5$.

 The remaining cases $p=2$, $m\geq 5$ and  $m-n=2,3,4$,   follows similarly using the adjustment to
$\delta_s $  observed at the end of the proof of  Theorem \ref{gausssumthm} .

\section{A more direct approach}

We should note that the Cochrane \& Zheng reduction technique \cite{cz}  can be applied to directly evaluate the Jacobi sums when $p$ is odd and $m\geq n+2$ instead of the Gauss sums. 
For example if $b=p^nb'$ with $p\nmid  b'$, then from \cite[ Theorem 3.1]{PPP}  we have
\begin{align*}
 J_b\left( \chi_1, \chi_2, p^m \right) &= \sum_{x=1}^{p^m} \chi_1(x) \chi_2(b-x)  = \sum_{x=1}^{p^m} \overline{\chi_1 \chi_2}(x)\chi_2(bx-1)
\\ &= p^{\frac{m+n}{2}} \overline{\chi_1 \chi_2}(x_0)\chi_2(bx_0-1) \left(\frac{ -2c_2rb'x_0}{p} \right)^{m-n} \varepsilon_{p^{m-n}}, 
\end{align*}
where  $x_0$ is a solution to the characteristic equation \be \label{characteristiceqn} c_1+c_2-c_1bx \equiv 0 \text{ mod }p^{\lfloor\frac{m+n}{2}\rfloor+1},\;\;\; p \nmid x(bx-1).\ee  
 If  \eqref{characteristiceqn}  has no solution mod  $p^{\lfloor\frac{m+n}{2}\rfloor}$  then $J_b(\chi_1,\chi_2,p^m)=0$. In particular we see that:
\begin{enumerate}
\item[i. ] If $p \nmid c_1$ and $p \mid c_2$, then $J_b(\chi_1,\chi_2,p^m)=0$.
\item[ii. ] If $p\nmid c_1 c_2(c_1+c_2)$ then
$$
J_b(\chi_1,\chi_2,p^m)= \chi_1\chi_2(b) \chi_1(c_1) \chi_2(c_2) \overline{\chi_1\chi_2}(c_1+c_2) p^\frac{m}{2} \delta_2.
$$
where $$\delta_2 =  \left( \frac{-2r}{p} \right)^m \left( \frac{c_1c_2(c_1+c_2)}{p} \right)^m \varepsilon_{p^m}.$$ 
\item[iii.] If $p\nmid c_1$ and $b=p^nb'$, $p\nmid b'$ with $n<m-1$ then $J_b(\chi_1,\chi_2,p^m)=0$ unless $p^n \mid\mid (c_1+c_2)$ in which case writing $w=(c_1+c_2)/p^n$,
$$   J_b(\chi_1,\chi_2,p^m)= \chi_1\chi_2(b') \frac{\chi_1(c_1) \chi_2(c_2)}{ \chi_1\chi_2(w)} p^\frac{m+n}{2}\left( \frac{-2r}{p} \right)^{m-n}\left( \frac{c_1c_2w}{p} \right)^{m-n} \varepsilon_{p^{m-n}} .         $$

\end{enumerate}

To see (ii) observe that if $p\mid b$,  then $J_b(\chi_1,\chi_2,p^m)=0$,  and if $p\nmid b$,  then we
can take 
$x_0 \equiv (c_1+c_2)c_1^{-1}b^{-1} \text{ mod } p^m$ (and hence $bx_0-1=c_2c_1^{-1}$).
Similarly for (iii) if $p^n \mid\mid (c_1+c_2)$ we can take 
$x_0 \equiv p^{-n}(c_1+c_2)c_1^{-1}(b')^{-1}$ mod $p^m$.

Of course  we can write the generalized sum in the form
\begin{align*}J_{p^n}(\chi_1, \dots, \chi_k) & = \sum_{x_3=1}^{p^m} \dots \sum_{x_k=1}^{p^m}\chi_3(x_3) \dots \chi_k(x_k) \sum_{\substack{x_1=1 \\ b:=p^n-x_3- \dots -x_k}}^{p^m} \chi_1(x_1) \chi_2(b-x_1)
\\& =\sum_{x_3=1}^{p^m} \dots \sum_{x_k=1}^{p^m}\chi_3(x_3) \dots \chi_k(x_k) J_b(\chi_1,\chi_2,p^m),
\end{align*}
Hence  assuming that at least one of the  $\chi_i$ is primitive mod $p^m$  (and reordering the characters as necessary) we see from (i) that $J_{p^n}(\chi_1,\ldots ,\chi_k,p^m)=0$  unless all the 
characters are  primitive mod $p^m$. Also when $k=2$, $\chi_1,\chi_2$ primitive, we see from (iii) that
$J_{p^n}(\chi_1,\chi_2,p^m)=0$ unless $\chi_1\chi_2$ is induced by a primitive mod $p^{m-n}$ 
character, in which case we recover the formula in Theorem \ref{maintheorem} on observing 
that $\left(\frac{c_1c_2}{p}\right)^n\varepsilon_{p^{m-n}}^2=\varepsilon_{p^m}^2$;
this is plain when $n$ is even, for $n$ odd observe that $\left(\frac{c_1c_2}{p}\right) =\left(\frac{(c_1+c_2)^2-(c_1-c_2)^2}{p}\right)=\left(\frac{-1}{p}\right)$. We show  that a simple induction  recovers the formula for all $k\geq 3$. 
We assume that all the $\chi_i$ are primitive mod $p^m$
and observe that when $k\geq 3$ we can further  assume (reordering as necessary) that $\chi_1\chi_2$ 
is also primitive mod $p^m$, since if $\chi_1\chi_3$, $\chi_2\chi_3$ are not primitive then $p\mid (c_1+c_3)$ and $p\mid (c_2+c_3)$ and $(c_1+c_2)\equiv -2c_3\not\equiv 0$ mod $p$ and $\chi_1\chi_2$ is primitive. Hence from (ii) we can write
\begin{align*}
J_{p^m}(\chi_1, \dots, \chi_k,p^m) &= \frac{\chi_1(c_1)\chi_2(c_2)}{\chi_1\chi_2(c_1+c_2)} p^\frac{m}{2} \delta_2 
\sum_{x_3=1}^{p^m} \dots \sum_{x_k=1}^{p^m}\chi_3(x_3) \dots \chi_k(x_k) \chi_1\chi_2(b)
\\ &= \chi_1(c_1)\chi_2(c_2)\overline{\chi_1\chi_2}(c_1+c_2) p^\frac{m}{2} \delta_2 J_{p^n}(\chi_1\chi_2, \chi_3, \dots, \chi_k,p^m).
\end{align*}
Assuming the result for $k-1$  characters we have $J_{p^n}(\chi_1\chi_2, \chi_3, \dots, \chi_k,p^m)=0$ unless $\chi_1\cdots \chi_k$ is induced by a primitive mod $p^{m-n}$ character
in which case
$$ J_{p^n}(\chi_1\chi_2, \chi_3, \dots, \chi_k,p^m)=  \chi_1\chi_2(c_1+c_2)  \prod_{i=3}^{k}\chi_i(c_i) \overline{\chi_1 \dots \chi_k}(v) \delta_3 p^{\frac{m(k-2)+n}{2}} $$
with $$\delta_3 =\left( \frac{-2r}{p} \right)^{m(k-2)+n} \left(\frac{v}{p}\right)^{m-n}\left( \frac{(c_1+c_2)c_3 \dots c_k}{p} \right)^m \varepsilon_{p^m}^{k-1}\varepsilon_{p^{m-n}}^{-1}.$$
Our formula for $k$ characters then  follows on  observing that  $\delta_2\delta_3 =\delta$.

\end{document}